\theoremstyle{plain}
\newtheorem{thm}{Theorem}
\newtheorem{cor}[thm]{Corollary}
\newtheorem{prop}[thm]{Proposition}
\theoremstyle{definition}
\newtheorem{defn}[thm]{Definition}
\theoremstyle{remark}
\newtheorem{rem}[thm]{Remark}
\newcommand{\norm}[1]{\left\Vert#1\right\Vert}
\newcommand{\Real}{\mathbb R}
\begin{document}
\bibliographystyle{plain}
\title{Non-uniqueness for the Euler equations: the effect of the boundary}
\author{Claude Bardos\footnote{Laboratoire J.-L. Lions, Universit\'{e} de Paris VII ``Denis Diderot'', Paris, France} \and  L\'aszl\'o Sz\'ekelyhidi Jr.\footnote{Mathematisches Institut, Universit\"at Leipzig, Germany}  \and Emil Wiedemann\footnote{Department of Mathematics, University of British Columbia, and Pacific Institute for the Mathematical Sciences, Vancouver, B.C., Canada}}
\date{}
\maketitle
\begin{abstract}

We consider rotational initial data for the two-dimensional incompressible Euler equations on an annulus. Using the convex integration framework, we show that there exist infinitely many admissible weak solutions (i.e. such with non-increasing energy) for such initial data. As a consequence, on bounded domains there exist admissible weak solutions which are not dissipative in the sense of P.-L. Lions, as opposed to the case without physical boundaries. Moreover we show that admissible solutions are dissipative provided they are H\"{o}lder continuous near the boundary of the domain.    
\end{abstract}

\begin{center}
\it{Dedicated to the memory of Professor Mark Iosifovich Vishik}
\end{center}

\section{Introduction}
The study of weak solutions of the incompressible Euler equations is motivated by (at least) two aspects of fluid flow: the presence of instabilities, most notably the Kelvin-Helmholtz instability, and fully developed 3-dimensional turbulence. Concerning the latter, an important problem arises in connection with the famous $5/3$ law of Obukhov-Kolmogorov and the conjecture of Onsager regarding energy conservation. We refer to \cite{EyinkSreenivasan,bardostiti2} and \cite{bdlsz,daneri,isett} for more information and recent progress regarding this problem. 

Concerning the former, it has been the subject of intensive research to define a physically meaningful notion of weak solution, that can capture the basic features of such instabilities and be analytically well behaved at the same time. Due to the lack of an analogous theorem to the existence of Leray-Hopf weak solutions of the Navier-Stokes equations, several weaker notions have been considered.

Dissipative solutions of the incompressible Euler equations were introduced by P.-L. Lions \cite{lions} as a concept of solution with two desirable properties: (i) existence for arbitrary initial data, and (ii) weak-strong uniqueness, meaning that a dissipative weak solution agrees with the strong solution as long as the latter exists. Dissipative solutions have been shown to arise, among others, as viscosity \cite{lions} or hydrodynamic \cite{saintraymond} limits of the incompressible Euler equations.
The major draw-back of dissipative solutions is that, in general, the velocity field does not solve the Euler equations in the sense of distributions. 

Weak solutions (i.e. distributional solutions with some additional properties) on the other hand have been constructed by various techniques, see\cite{scheffer,shnirel1,shnirel2,euler1,euler2,euleryoung,eulerexistence,szlecturenotes}. Many of these results come with a high level of non-uniqueness, even violating the weak-strong uniqueness property - we refer to the survey \cite{hprinciple}. In particular, in \cite{eulerexistence} the existence of global in time weak solutions was shown for arbitrary initial data.

Due to the high level of non-uniqueness, a natural question is whether there are any selection criteria among weak solutions. With this regard, it has been noted in \cite{DuchonRobert,euler2} that, in the absense of boundaries a weak solution is dissipative in the sense of Lions, provided the weak energy inequality 
\begin{equation}\label{ei}
\int |v(x,t)|^2\,dx\leq \int |v(x,0)|^2\,dx\qquad\textrm{ for almost every $t>0$}
\end{equation}    
holds. In \cite{euler2} this condition is referred to as an admissibility condition, in analogy with the entropy condition used in hyperbolic conservation laws \cite{DafermosBook}. Admissibility turned out to be a useful selection criterion among weak solutions, since already in the weak form in \eqref{ei} it implies the weak-strong uniqueness property of dissipative solutions (stronger versions of the energy inequality are discussed in \cite{euler2}). This is even the case not just for distributional solutions but also for measure-valued solutions, see \cite{brenierdelellissz}.  

Despite the weak-strong uniqueness property, there exists a large, in fact $L^2$ dense set of initial data on the whole space or with periodic boundary conditions \cite{euleryoung} (see also \cite{szlecturenotes}), for which the initial value problem admits infinitely many {\it admissible} weak solutions. Such initial data, called ``wild initial data'', necessarily has to be irregular. 

The non-uniqueness of admissible weak solutions is intimately related to the presence of instabilities. For instance, in \cite{vortexpaper} the non-uniqueness of admissible weak solutions was shown for the flat vortex sheet initial data
\begin{equation}\label{e:flat}
v_0(x)=\begin{cases}
e_1 & \text{if $x_d\in(0,\frac{1}{2})$}\\
-e_1 & \text{if $x_d\in(-\frac{1}{2},0)$,}
\end{cases}
\end{equation} 
extended periodically to the torus $\mathbb{T}^d$. Note that the stationary vector field is an obvious solution in this case, but the statement in \cite{vortexpaper} is that there exist infinitely many non-stationary solutions. A common feature in these solutions is that for time $t>0$ they exhibit an expanding "turbulent" region around the initial vortex sheet, much akin to the propagation of singularity in the classical Kelvin-Helmholtz problem. Further examples of this nature appeared in \cite{shearflow} and recently in \cite{isentropic} for the compressible Euler system.

Motivated by the idea that it is the underlying Kelvin-Helmholtz instability that is responsible for the non-uniqueness of admissible weak solutions, we study in this note the case of domains with boundary. We show that the presence of a (smooth) boundary can lead to the same effect of an expanding turbulent region as in \cite{vortexpaper}. As a corollary, we observe that admissibility does not imply the weak-strong uniqueness property in domains with boundary. 

\section{Statement of the main results}

\subsection{Formulation of the equations}

We study weak solutions of the initial and boundary value problem for the incompressible Euler equations
\begin{equation}\label{euler}
\begin{aligned}
\partial_tv+v\cdot\nabla v+\nabla p&=0\\
\operatorname{div}v&=0\\
v|_{t=0}&=v_0\\
\end{aligned}
\end{equation}
complemented with the usual kinematic boundary condition
$$
v|_{\partial\Omega}\cdot\nu=0.
$$ 
Here, $\Omega\subset\Real^d$, $d\geq 2$, is a domain with sufficiently smooth boundary, $T>0$ a finite time, $v:\Omega\times[0,T)\rightarrow\Real^d$ the velocity field, $p:\Omega\times(0,T)\rightarrow\Real$ the scalar pressure, $v_0$ the initial velocity and $\nu$ the inner unit normal to the boundary of $\Omega$.

In order to give the precise definition of weak solutions, consider the space of solenoidal vectorfields on $\Omega$ (cf. Chapter III of \cite{galdibook}),  
\begin{equation*}
\begin{aligned}
H(\Omega)=\big\{v\in L^2&(\Omega;\Real^d):\int_{\Omega}v\cdot\nabla p dx=0\\
  &\text{for every $p\in W^{1,2}_{loc}(\Omega)$ such that $\nabla p\in L^2(\Omega)$}\big\}.
\end{aligned}
\end{equation*}

Let $v_0\in H(\Omega)$. An {\it admissible weak solution} of (\ref{euler}) with initial data $v_0$ is defined to be a vectorfield $v\in L^{\infty}(0,T;H(\Omega))$ such that for every test function $\phi\in C_c^{\infty}(\Omega\times[0,T);\Real^2)$ with $\operatorname{div}\phi=0$, we have
\begin{equation*}
\int_0^T\int_{\Omega}\left(\partial_t\phi\cdot v+\nabla\phi:v\otimes v\right)dxdt+\int_{\Omega}v_0(x)\cdot\phi(x,0)dx=0,
\end{equation*} 
and the energy inequality \eqref{ei} holds.

We remark in passing that in fact one may assume that admissible weak solutions are in the space $C([0,T);H_w(\Omega))$, where $H_w(\Omega)$ is the space $H(\Omega)$ equipped with the weak $L^2$-topology. Indeed, dissipative solutions of Lions are also defined in this space. Nevertheless, for simplicity we will just treat the velocity fields as elements in the larger space $L^{\infty}(0,T;H(\Omega))$.

\subsection{Rotationally symmetric data}

In the present paper, we consider rotationally symmetric initial data in two dimensions. It should be noted that the restriction to 2 dimensions is purely for simplicity of presentation - the constructions and the methods can be easily extended to higher dimensions. Similarly, we will consider as domain an annulus purely for simplicity of presentation - the nontrivial topology of the domain does not play a role in our results. 

By ``rotational'' we mean initial data of the form 
\begin{equation}\label{rotational}
v_0(x)=\alpha_0(r)(\sin\theta,-\cos\theta)
\end{equation} 
on an annulus
\begin{equation}\label{annulus}
\Omega=\{x\in\Real^2: \rho<|x|<R\},
\end{equation}
where $0<\rho<R<\infty$. Vector fields as in (\ref{rotational}) are known to define stationary solutions to the Euler equations regardless of the choice of $\alpha_0$, and are frequently used as explicit examples in the study of incompressible flows \cite{amick, shnirel3, bertozzimajda}.

Fix a radius $r_0$ with $\rho<r_0<R$ and consider the initial data on the annulus given by \eqref{rotational} with
\begin{equation}\label{initial}
\alpha_0(r)=\begin{cases}-\frac{1}{r^2} &\text{if $\rho<r<r_0$}\\
\frac{1}{r^2} &\text{if $r_0<r<R$},
\end{cases}
\end{equation}
which corresponds to a rotational flow with a jump discontinuity on the circle $\{r=r_0\}$. 

\begin{thm}\label{wildrotation}
Let $\Omega$ be an annulus as in (\ref{annulus}), $T>0$ a finite time, and $v_0$ be rotational as in (\ref{rotational}) and (\ref{initial}). Apart from the stationary solution $v(\cdot,t)=v_0$, there exist infinitely many non-stationary admissible weak solutions of the Euler equations on $\Omega\times(0,T)$ with initial data $v_0$. Among these, infinitely many have strictly decreasing energy, and infinitely many conserve the energy.
\end{thm}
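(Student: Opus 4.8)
The plan is to reduce the theorem to the construction of a single \emph{rotationally symmetric subsolution} and then invoke the convex integration machinery. In the framework of \cite{vortexpaper} it suffices to exhibit a triple $(v,u,q)$, with $u$ a symmetric trace-free $2\times 2$ matrix field, solving the relaxed (linear) system
\begin{equation*}
\partial_t v+\operatorname{div}u+\nabla q=0,\qquad \operatorname{div}v=0,
\end{equation*}
with the kinematic boundary condition and initial datum $v(\cdot,0)=v_0$, together with a prescribed energy density $\bar e(x,t)$ for which the pointwise subsolution inequality $\lambda_{\max}(v\otimes v-u)<\bar e$ holds \emph{strictly} on a nonempty open set $\mathcal U\subset\Omega\times(0,T)$ and with equality on its complement. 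Given such a subsolution, the convex integration lemma produces infinitely many weak solutions $\tilde v$ of \eqref{euler} that coincide with $v$ outside $\mathcal U$ and satisfy $\tfrac12|\tilde v|^2=\bar e$ almost everywhere; since these agree with the stationary field $v_0$ away from $\mathcal U$ but oscillate inside it, they are genuinely non-stationary and mutually distinct. The energy inequality \eqref{ei} then reduces to the single scalar condition that $t\mapsto\int_\Omega\bar e(x,t)\,dx$ be non-increasing.

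The second step is to build the subsolution using the symmetry. Rotational symmetry, incompressibility and the boundary condition together force $v$ to be purely tangential, $v=\gamma(r,t)\,\tau$ with $\tau=(\sin\theta,-\cos\theta)$ and $\gamma(r,0)=\alpha_0(r)$; indeed any radial component $\delta(r,t)\,n$ would satisfy $\delta'+\delta/r=0$ and vanish on $\partial\Omega$, hence be identically zero. Writing the trace-free stress in the moving frame $\{n,\tau\}$, with $n=(\cos\theta,\sin\theta)$, as
\begin{equation*}
u=a(r,t)\bigl(\tau\otimes\tau-n\otimes n\bigr)+b(r,t)\bigl(n\otimes\tau+\tau\otimes n\bigr),
\end{equation*}
and folding the isotropic part into $q$, a direct computation of the divergences shows that the radial part of the momentum equation determines $a$ and $q$, while the tangential part yields an evolution equation $\partial_t\gamma=F[b]$ driven solely by the shear stress $b$. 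Thus $b$ plays the role of the free relaxation parameter: a nonzero $b$ produces a genuine, non-stationary evolution of the angular profile $\gamma$, in contrast to the stationary solution, which corresponds to $b\equiv0$.

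The third step is to design the \emph{turbulent zone}. I introduce an expanding annular mixing region $\mathcal U=\{(x,t):r_-(t)<|x|<r_+(t)\}$ with $r_\pm(0)=r_0$, for instance self-similarly with $r_\pm(t)=r_0\pm ct$, inside which $\gamma(\cdot,t)$ interpolates continuously between $-1/r^2$ and $+1/r^2$; outside $\mathcal U$ I set $(v,u,q)$ equal to the stationary solution $v_0$, for which the subsolution inequality holds with equality. Seeking $\gamma,a,b$ as functions of the self-similar variable reduces the construction to ODEs, and one chooses the energy density $\bar e(r,t)\le\tfrac1{2r^4}$ so that $\lambda_{\max}(v\otimes v-u)<\bar e$ strictly on $\mathcal U$. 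Since outside $\mathcal U$ the density equals that of the stationary flow, the total energy is controlled by the deficit inside $\mathcal U$: taking $\bar e\equiv\tfrac1{2r^4}$ everywhere yields energy-conserving solutions, while taking $\bar e<\tfrac1{2r^4}$ on $\mathcal U$ with $\int_\Omega\bar e\,dx$ strictly decreasing yields solutions of strictly decreasing energy. As $t\to0^+$ the zone $\mathcal U$ collapses to $\{r=r_0\}$, so $v(\cdot,t)\to v_0$ in $L^2$ and the initial datum is attained; varying the interpolation profile and the energy deficit gives infinitely many subsolutions in each of the two classes, and convex integration multiplies each into infinitely many exact solutions.

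The main obstacle is the compatibility at the heart of the third step: one must verify that the shear stress $b$ required to drive $\gamma$ across the jump can be chosen so that $v\otimes v-u$ lies \emph{strictly} inside the relevant convex hull, i.e. $\bar e-\lambda_{\max}(v\otimes v-u)>0$ on $\mathcal U$, while keeping $\bar e\le\tfrac1{2r^4}$ so as not to increase the energy. Unlike the flat vortex sheet of \cite{vortexpaper}, here the $r$-dependence introduced by the curvature enters the formulas for $a$ and $q$ and couples the radial balance to the angular evolution, so the pointwise inequality becomes an $r$-dependent algebraic constraint on $(\gamma,a,b)$ that must hold throughout the entire expanding annulus. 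Establishing that this system admits solutions with the strict gap on an open set — equivalently, that the vortex-sheet profile genuinely relaxes into the interior of the wave cone — is the key computation on which the whole argument rests.
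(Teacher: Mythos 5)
Your overall architecture is the same as the paper's: a rotationally symmetric subsolution written in the moving frame $\{n,\tau\}$ (your $(a,b)$ are, up to sign, the paper's $(\beta,\gamma)$ in \eqref{defu}), an expanding self-similar annulus $\mathcal{U}=\{r_0-ct<|x|<r_0+ct\}$ as the turbulent zone, an energy density $\bar e\leq\tfrac{1}{2r^4}$ tuned to give either conservation or strict dissipation, and an appeal to the convex integration criterion (Theorem \ref{t:criterion}). However, there is a genuine gap: you explicitly defer the one computation on which everything rests, namely exhibiting a shear stress $b$ that simultaneously (i) drives the tangential profile across the jump via $\partial_t\gamma=F[b]$, and (ii) keeps $\lambda_{\max}(v\otimes v-u)$ strictly below some $\bar e\leq\tfrac{1}{2r^4}$ on $\mathcal{U}$ and equal to it outside. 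You correctly identify that the curvature term couples the equations and makes this an $r$-dependent constraint, but you do not resolve it, and without it the theorem is not proved: it is not a priori clear that the vortex-sheet datum relaxes strictly into the interior of the constraint set on an open set.

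The paper closes exactly this gap with an explicit choice. The tangential equation in \eqref{preburgers} reads $\partial_t\alpha+\partial_r\gamma+\tfrac{2}{r}\gamma=0$, and the centrifugal term $\tfrac{2}{r}\gamma$ is absorbed by substituting $\alpha=f/r^2$ and choosing the shear stress as in \eqref{gamma}, $\gamma=-\tfrac{\lambda}{2r^2}(1-f^2)$; after multiplying by $r^2$ this reduces to Burgers' equation \eqref{burgers} for $f$, whose rarefaction wave satisfies $|f|<1$ strictly inside the fan $r_0-\lambda t<r<r_0+\lambda t$. The eigenvalue computation then gives, in closed form, $e(\bar v,\bar u)=\tfrac{1}{2r^4}\left[1-(1-r^2\lambda)(1-f^2)\right]$ as in \eqref{genenergy}, which is strictly below $\tfrac{1}{2r^4}$ on $\mathcal{U}$ precisely when $\lambda<1/R^2$, and one interpolates with the parameter $\epsilon$ in Proposition \ref{p:subsol} to get both the energy-conserving ($\epsilon=0$) and strictly dissipative ($\epsilon>0$) families. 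So your proposal is the right strategy with the decisive step missing; to complete it you would need to produce this (or an equivalent) explicit pair $(f,\gamma)$ and verify the strict gap, rather than assert that the self-similar ODE system ``admits solutions'' with that property.
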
 
Our proof, given in Section \ref{nonuniqueness} below, relies on the techniques from \cite{euler2} and is similar to the construction in \cite{vortexpaper}. 

Regarding the quest for suitable selection principles, a much-discussed criterion is the viscosity solution, defined to be a solution obtained as a weak limit of Leray-Hopf solutions as viscosity converges to zero.  In the case of the initial data in \eqref{e:flat} it is an easy exercise (see for instance \cite{shearflow}) to show that the viscosity solution agrees with the stationary solution. In the rotational case \eqref{initial} the same is true, as we show in Section \ref{viscosity} below:
\begin{prop}\label{introuniqueness}
Let $\Omega\subset\Real^2$ be an annulus and let initial data be given by (\ref{rotational}). Then every sequence of Leray-Hopf solutions of the Navier-Stokes equations with viscosities tending to zero which correspond to this initial data will converge strongly to the stationary solution $v(\cdot,t)=v_0$ of the Euler equations. 
\end{prop}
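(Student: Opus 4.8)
The plan is to exploit the rotational symmetry of both the data and the Navier--Stokes system in order to reduce the problem to a one-dimensional \emph{linear} parabolic equation, from which the vanishing-viscosity limit can be read off directly via the strong continuity of the associated heat semigroup. First I would argue that each Leray--Hopf solution $v^\nu$ inherits the rotational symmetry of $v_0$. In two dimensions, Leray--Hopf solutions on a smooth bounded domain with no-slip boundary conditions are unique (Lions--Prodi), so if $R_\phi$ denotes rotation by angle $\phi$, then $x\mapsto R_{-\phi}v^\nu(R_\phi x,t)$ is again a Leray--Hopf solution with the same, rotationally invariant, initial datum $v_0$; uniqueness then forces it to coincide with $v^\nu$. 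Hence $v^\nu$ is rotationally invariant, i.e. in polar coordinates $v^\nu=u_r^\nu(r,t)\,e_r+u_\theta^\nu(r,t)\,e_\theta$ with components independent of $\theta$. Incompressibility gives $\tfrac1r\partial_r(r u_r^\nu)=0$, so $r u_r^\nu$ is constant in $r$, and the no-slip condition at $r=\rho$ forces $u_r^\nu\equiv 0$. Thus $v^\nu$ is purely azimuthal.

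For a purely azimuthal field the convective term $v^\nu\cdot\nabla v^\nu$ is purely radial (it equals $-\tfrac{(u_\theta^\nu)^2}{r}\,e_r$) and the radial part of the vector Laplacian vanishes, so the radial component of the momentum equation only determines the pressure through $\partial_r p=\tfrac{(u_\theta^\nu)^2}{r}$, while the azimuthal component becomes the linear parabolic equation
\begin{equation}\label{eq:azimuth}
\partial_t u_\theta^\nu=\nu\Big(\partial_r^2 u_\theta^\nu+\tfrac1r\partial_r u_\theta^\nu-\tfrac{u_\theta^\nu}{r^2}\Big),\qquad u_\theta^\nu|_{r=\rho}=u_\theta^\nu|_{r=R}=0,
\end{equation}
with initial datum $u_\theta^\nu(\cdot,0)=-\alpha_0$. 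Let $L$ denote the operator on the right-hand side and $\{S(\tau)\}_{\tau\ge 0}$ the strongly continuous (indeed analytic, self-adjoint, negative) semigroup it generates on $L^2((\rho,R),r\,dr)$ with Dirichlet boundary conditions; integration by parts against the weight $r\,dr$ confirms self-adjointness and dissipativity. Then $u_\theta^\nu(\cdot,t)=S(\nu t)(-\alpha_0)$, and the crucial point is the rescaling of time: as $\nu\to 0$ the semigroup is evaluated only at times $\nu t\in[0,\nu T]$, a window shrinking to $0$.

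Consequently, by strong continuity of the semigroup at $\tau=0$,
\begin{equation*}
\sup_{t\in[0,T]}\norm{u_\theta^\nu(\cdot,t)+\alpha_0}_{L^2((\rho,R),r\,dr)}\le\sup_{\tau\in[0,\nu T]}\norm{S(\tau)(-\alpha_0)-(-\alpha_0)}_{L^2}\To 0
\end{equation*}
as $\nu\to 0$, where I only use that $\alpha_0\in L^2((\rho,R),r\,dr)$ (so the argument is insensitive to the specific profile \eqref{initial}). Passing back to Cartesian coordinates yields $v^\nu\to v_0$ strongly in $L^\infty(0,T;L^2(\Omega))$, hence in $L^2(\Omega\times(0,T))$, and since $v_0$ is precisely the stationary Euler solution this is the claim.

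The main obstacle is the mismatch between the no-slip condition satisfied by $v^\nu$ and the fact that $\alpha_0$ does not vanish at $r=\rho,R$: for each fixed $\nu>0$ the profile $S(\nu t)(-\alpha_0)$ develops a boundary layer of width $O(\sqrt{\nu t})$ near each component of $\partial\Omega$, across which $u_\theta^\nu$ drops from $\approx-\alpha_0$ to $0$. This boundary layer is genuine and rules out uniform convergence, but it occupies a set of measure $O(\sqrt\nu)$ and contributes only $O(\sqrt\nu)$ to the $L^2$ norm, so it is invisible in the strong $L^2$ limit; a similar remark applies, in the case of interest \eqref{initial}, to the smoothing of the initial jump across $\{r=r_0\}$, which is again confined to a shrinking region. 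The remaining care is bookkeeping: making the symmetry reduction rigorous at the level of merely $L^2$ (rather than classical) solutions, and checking that the weak formulation of \eqref{euler} for a circular field is genuinely equivalent to the weak form of \eqref{eq:azimuth}.
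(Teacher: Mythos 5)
Your proposal is correct and follows essentially the same route as the paper: both exploit two-dimensional uniqueness of Leray--Hopf solutions to reduce, via the rotational symmetry of the data, to the linear one-dimensional parabolic problem $\partial_t u_\theta=\nu\bigl(\partial_r^2 u_\theta+\tfrac1r\partial_r u_\theta-\tfrac{u_\theta}{r^2}\bigr)$ with Dirichlet conditions (the nonlinearity being absorbed entirely into the radial pressure gradient), and both conclude from the time rescaling $\nu t\to 0$ that the heat semigroup returns the initial profile in the strong $L^2$ limit. Your version of the symmetry step (rotate the solution and invoke uniqueness) and your explicit semigroup-continuity estimate are slightly more detailed renderings of what the paper asserts, but the argument is the same.
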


Finally, we discuss the relation between admissible weak solutions and dissipative solutions of Lions in bounded domains. For the convenience of the reader we recall in Section \ref{dissipative} the precise definition of dissipative solutions. As a corollary to Theorem \ref{wildrotation} we show in Section \ref{dissipative} that, contrary to the case without boundaries, admissible weak solutions need not be dissipative:
\begin{cor}\label{cor}
On $\Omega$ there exist admissible weak solutions which are not dissipative solutions.   
\end{cor}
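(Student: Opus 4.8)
The plan is to combine Theorem~\ref{wildrotation} with Proposition~\ref{introuniqueness} through the two structural properties of dissipative solutions recalled in Section~\ref{dissipative}: first, that every weak limit of Leray--Hopf solutions as the viscosity tends to zero is a dissipative solution, and second, the stability encoded in the relative energy inequality that defines them. First I would fix the stationary flow $v(\cdot,t)=v_0$ as the reference object. By Proposition~\ref{introuniqueness} the only vanishing-viscosity limit issuing from the datum \eqref{rotational}--\eqref{initial} is precisely this stationary flow, so by the first property $v_0$ is itself a dissipative solution. This shows that the class of dissipative solutions for this datum is nonempty and singles out $v_0$ as its natural element.

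The heart of the argument is to show that $v_0$ is in fact the \emph{only} dissipative solution with initial data $v_0$. For this I would test the relative energy inequality against the family of smooth steady shear flows $w_\beta(x)=\beta(r)(\sin\theta,-\cos\theta)$, where $\beta$ ranges over smooth radial profiles. Each $w_\beta$ is an exact stationary solution of the Euler equations, since the centrifugal term is balanced by a radial pressure; hence its Euler residual vanishes and the relative energy inequality collapses to $\|v(t)-w_\beta\|_{L^2}^2 \le e^{2t\|D(w_\beta)\|_{L^\infty}}\,\|v_0-w_\beta\|_{L^2}^2$, where $D(w_\beta)$ denotes the symmetric gradient. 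Letting $\beta$ approximate the discontinuous profile $\alpha_0$ from \eqref{initial} and passing to the limit, the goal is to force $v(t)=v_0$ for all $t$, so that any dissipative solution coincides with the stationary one. (More stringent choices of time-dependent $w$ are available if the steady family proves insufficient.) Once this uniqueness is in hand the corollary is immediate: Theorem~\ref{wildrotation} furnishes infinitely many \emph{non}-stationary admissible weak solutions with the same datum $v_0$, and since none of them equals $v_0$, none of them can be dissipative; any of the energy-conserving or strictly energy-decreasing wild solutions then provides the desired example.

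The main obstacle lies in the limiting step. As the profile $\beta$ is sharpened to resolve the vortex sheet at $\{r=r_0\}$, the quantity $\|v_0-w_\beta\|_{L^2}$ tends to zero but the Lipschitz constant $\|D(w_\beta)\|_{L^\infty}$ blows up, so the Gronwall exponential $e^{2t\|D(w_\beta)\|_{L^\infty}}$ degenerates at every positive time. A naive limit therefore only recovers strong continuity of $v$ at $t=0$, which the wild solutions of Theorem~\ref{wildrotation} also enjoy, and so does not by itself distinguish them from $v_0$. Overcoming this requires exploiting the finer structure of the datum, namely the energy conservation of $v_0$ together with its analyticity away from the single circle $\{r=r_0\}$ and up to the boundary, so that the comparison can be localised away from the sheet and the singular contribution controlled independently of the exponential factor. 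This is precisely where the role of the boundary enters, mirroring the positive criterion announced in the abstract, that admissibility together with H\"older continuity near $\partial\Omega$ already implies dissipativity.
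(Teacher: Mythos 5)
Your strategy hinges on proving that the stationary flow is the \emph{unique} dissipative solution emanating from the discontinuous datum \eqref{initial} on the full annulus, and that step is exactly where the argument breaks down --- as you yourself observe. The weak-strong uniqueness mechanism (Proposition \ref{weak-strong}) requires a comparison field $w\in C^1(\overline{\Omega}\times[0,T])$; the vortex-sheet datum $v_0$ is not such a field, and when you regularize it the Gronwall factor $\exp\left(2t\norm{S(w_\beta)}_{\infty}\right)$ blows up faster than $\norm{v_0-w_\beta}_{L^2}^2$ vanishes, so \eqref{defdissipative} yields no information for $t>0$. The proposed repair (localizing away from the sheet, invoking analyticity of $v_0$ off $\{r=r_0\}$) is not an argument but a hope, and there is strong evidence it cannot succeed: in the boundaryless setting admissibility implies dissipativity, so the wild solutions of \cite{vortexpaper} for the flat vortex sheet are non-stationary \emph{dissipative} solutions --- dissipative solutions are simply not unique for data with a jump in the interior, and nothing about the annulus geometry removes this obstruction as long as the circle $\{r=r_0\}$ sits in the interior of the domain. (Your preliminary step, that $v_0$ is dissipative because it arises as a vanishing-viscosity limit, is also delicate in the presence of no-slip boundaries, though harmless here since the convergence in Proposition \ref{introuniqueness} is strong; in any case it is not needed for the corollary.)

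The paper's proof avoids the issue entirely by moving the discontinuity onto the boundary: it restricts the subsolution of Proposition \ref{p:subsol} to the sub-annulus $\tilde{\Omega}=\{\rho<|x|<r_0\}$, on which the initial data $-x^{\perp}/|x|^3$ is smooth up to $\overline{\tilde{\Omega}}$. The restriction of a subsolution to a subdomain is again a subsolution, and the turbulent zone $\mathcal{U}$ still meets $\tilde{\Omega}\times(0,T)$ (it now grows inward from the boundary circle $\{|x|=r_0\}$), so Theorem \ref{t:criterion} again produces infinitely many non-stationary admissible weak solutions on $\tilde{\Omega}$. But now the stationary solution is genuinely $C^1(\overline{\tilde{\Omega}}\times[0,T])$, so Proposition \ref{weak-strong} applies verbatim and forces every dissipative solution with this datum to be the stationary one; hence the non-stationary admissible solutions are not dissipative. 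If you wish to salvage your outline, replace the approximation argument by this restriction to a subdomain whose boundary contains the jump circle.
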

Corollary \ref{cor} says that in the presence of boundary the weak-strong uniqueness might fail for admissible weak solutions. On the technical level the explanation for this lies in the observation that the notion of strong solution in a bounded domain does not allow any control of the boundary behaviour. Therefore in Section \ref{holder} we study what happens when additional boundary control is available: \begin{thm}\label{criterion}
Let $\Omega\subset\Real^2$ be a bounded domain with $C^2$ boundary. Suppose $v$ is an admissible weak solution of (\ref{euler}) on $\Omega$ for which there exists some $\delta>0$ and $\alpha>0$ such that $v$ is H\"{o}lder continuous with exponent $\alpha$ on the set
\begin{equation*}
\Gamma_{\delta}=\left\{x\in\overline{\Omega}:\operatorname{dist}(x,\partial\Omega)<\delta\right\},
\end{equation*} 
uniformly in $t$. Then $v$ is a dissipative solution. 
\end{thm}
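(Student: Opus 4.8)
The plan is to verify directly that $v$ satisfies the relative energy (``dissipativity'') inequality recalled in Section~\ref{dissipative}: for every $w\in C^\infty(\overline{\Omega}\times[0,T];\Real^2)$ with $\operatorname{div}w=0$ and $w\cdot\nu|_{\partial\Omega}=0$, and for a.e. $t\in(0,T)$,
\begin{equation*}
\norm{v(t)-w(t)}_{L^2(\Omega)}^2\le \norm{v_0-w(0)}_{L^2(\Omega)}^2\, e^{2\int_0^t\norm{d(w)}_{L^\infty}\,ds}+2\int_0^t\int_\Omega E(w)\cdot(w-v)\, e^{2\int_s^t\norm{d(w)}_{L^\infty}\,d\tau}\,dx\,ds,
\end{equation*}
where $d(w)=\tfrac12(\nabla w+\nabla w^{T})$ and $E(w)=\partial_t w+w\cdot\nabla w+\nabla p_w$ is the Euler residual of $w$ (its gradient part is immaterial, as $w-v$ is weakly solenoidal with vanishing normal trace). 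I would obtain this from a relative energy computation: writing $\tfrac12\norm{v-w}^2=\tfrac12\norm{v}^2-\int_\Omega v\cdot w+\tfrac12\norm{w}^2$, the first term is controlled by the admissibility inequality \eqref{ei}, the last term is smooth, and the cross term $\int_\Omega v\cdot w$ is evolved using the weak formulation of \eqref{euler} tested against $w$ (using the weak-in-time continuity noted after the definition of admissible weak solution to make sense of $v(t)$). Expanding and collecting, the inequality follows by Gronwall, \emph{provided} the only term not immediately accounted for, namely the boundary flux of the nonlinearity, vanishes.

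The heart of the matter is therefore to justify using $w$ --- which does not vanish on $\partial\Omega$ --- as a test field, even though the weak formulation in Section~2 only furnishes test functions in $C_c^\infty(\Omega\times[0,T))$. Formally, integrating $\int_\Omega\operatorname{div}(v\otimes v)\cdot w$ by parts produces the term $\int_{\partial\Omega}(v\cdot\nu)(v\cdot w)\,dS$, while the pressure contributes $\int_{\partial\Omega}p\,(w\cdot\nu)\,dS=0$ since $w\cdot\nu=0$. Thus the sole obstruction is the flux $\int_{\partial\Omega}(v\cdot\nu)(v\cdot w)\,dS$, and I claim it vanishes because the normal trace of $v$ is zero. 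Indeed, for a.e. $t$ the field $v(\cdot,t)$ lies in $H(\Omega)$, which weakly encodes both $\operatorname{div}v=0$ and $v\cdot\nu|_{\partial\Omega}=0$; since by hypothesis $v(\cdot,t)$ is Hölder, hence continuous, up to the boundary on $\Gamma_\delta$, integrating $\int_\Omega v\cdot\nabla p=0$ by parts against arbitrary $p\in C^\infty(\overline{\Omega})$ upgrades the weak boundary condition to the pointwise identity $v\cdot\nu=0$ on $\partial\Omega$.

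To turn the formal computation into a rigorous one I would approximate $w$ by interior-supported solenoidal fields $w_k=\chi_k w-z_k$, where $\chi_k$ is a cutoff vanishing within distance $1/k$ of $\partial\Omega$ (so $\nabla\chi_k$ is of size $k$, directed along $\nu$ via $\nabla\chi_k=\chi'(k\,d)\,k\,\nabla d$, and supported in a shell of width $\sim 1/k$) and $z_k$ is a divergence-correction supported in the same shell. All linear terms converge to their counterparts with $w$, since $w_k\to w$ in $L^2$ and $v\in L^\infty(0,T;L^2)$. The only delicate term is the nonlinear one, $\int_\Omega v\otimes v:\nabla w_k$: its dangerous part is $\int_\Omega(v\cdot\nabla\chi_k)(v\cdot w)$, where $v\cdot\nabla\chi_k\sim k\,(v\cdot\nu)$. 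Here the Hölder hypothesis is decisive: since $v\cdot\nu$ vanishes on $\partial\Omega$ and $v$ is $C^\alpha$ on $\Gamma_\delta$, one has $\abs{v\cdot\nu}\le C\,\operatorname{dist}(\cdot,\partial\Omega)^\alpha$, so on the shell this contribution is bounded by $C\,k\cdot k^{-\alpha}\cdot k^{-1}=C\,k^{-\alpha}\to0$, while the uniform bound $v\in L^\infty(\Gamma_\delta)$ (again from Hölder continuity) keeps $v\otimes v$ harmless elsewhere.

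I expect the main obstacle to be precisely this boundary-layer analysis: controlling the divergence-correction $z_k$ and the gradient terms it generates on a shell whose Lipschitz character degenerates as $k\to\infty$, and verifying that every error carries at least one factor of a vanishing normal component --- of $v$ with rate $\alpha$, or of the smooth field $w$ with rate $1$ --- to defeat the $\operatorname{dist}^{-1}$ blow-up of $\nabla\chi_k$. Once the limit identity $\int_\Omega v(t)\cdot w(t)-\int_\Omega v_0\cdot w(0)=\int_0^t\int_\Omega(v\cdot\partial_s w+v\otimes v:\nabla w)\,dx\,ds$ is established for all admissible $w$, the dissipativity inequality, and hence the theorem, follows by the standard rearrangement and Gronwall argument. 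The essential structural point is that any positive exponent $\alpha$ suffices, since it only has to beat the single power of $\operatorname{dist}$ lost to the cutoff.
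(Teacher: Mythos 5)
Your proposal follows essentially the same route as the paper: reduce dissipativity to the evolution identity for $\int_\Omega v\cdot w$ via the relative-energy rearrangement and Gr\"onwall, justify it by truncating $w$ near $\partial\Omega$, and defeat the $\operatorname{dist}^{-1}$ blow-up of the cutoff gradient with the bound $|v\cdot\nu|\le C\operatorname{dist}^\alpha$ coming from H\"older continuity plus the vanishing normal trace --- this is exactly the paper's argument. The one piece you leave open, the divergence correction $z_k$, is resolved in the paper by cutting off the stream function rather than the field, i.e. $w_\epsilon=\nabla^{\perp}\bigl(\chi(d/\epsilon)\psi\bigr)$ with $\psi|_{\partial\Omega}=0$ (so $|\psi|\le C\operatorname{dist}$), which in your notation gives the explicit correction $z_k=\psi\,\nabla^{\perp}\chi_k$; its gradient contributes the term with $\epsilon^{-2}\chi''\psi$, which is again controlled by the factor $|v_\nu|\lesssim\operatorname{dist}^\alpha$, and the multiply connected case is handled by subtracting the constant boundary values $\psi^i$ on each component.
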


\section{Subsolutions and convex integration}\label{s:subsolution}

In order to prove Theorem \ref{wildrotation} we recall the basic framework developed in \cite{euler1,euler2}, with slight modifications to accomodate for domains with boundary. 
For further details we refer to the survey \cite{hprinciple} and the recent lecture notes \cite{szlecturenotes}.

To start with, recall the definition of subsolution. To this end let us fix a non-negative function 
$$
\overline{e}\in L^\infty(0,T;L^1(\Omega)),
$$
which will play the role of the (kinetic) energy density. We will work in the space-time domain 
$$
\Omega_T:=\Omega\times (0,T),
$$
where $\Omega\subset\Real^d$ is either an open domain with Lipschitz boundary or $\Omega=\mathbb{T}^d$.

\begin{defn}[Subsolution]\label{d:subsolution}
A subsolution to the incompressible Euler equations with respect to the kinetic energy density $\overline{e}$
is a triple 
$$
(\bar{v},\bar{u},\bar{q}):\Omega_T\to \Real^d\times\mathcal{S}^{d\times d}_0\times\Real
$$
with $\bar{v}\in L^\infty(0,T;H(\Omega)),\,\bar{u}\in L^1_{loc}(\Omega_T),\, \bar{q}\in \mathcal{D}'(\Omega_T)$, such that
\begin{equation}\label{e:LR}
\left\{\begin{array}{l}
\partial_t \bar{v}+\mathrm{div }\bar{u}+\nabla \bar{q} =0\\
\mathrm{div }\bar{v} =0,
\end{array}\right. \qquad \mbox{in the sense of distributions;}
\end{equation}
and moreover
\begin{equation}\label{e:CR}
\bar{v}\otimes \bar{v}-\bar{u}\leq \tfrac{2}{d}\overline{e}\,I\quad\textrm{ a.e. $(x,t)$.}
\end{equation}
\end{defn}
Here $\mathcal{S}^{d\times d}_0$ denotes the set of symmetric traceless $d\times d$ matrices and $I$ is the identity matrix.
Observe that subsolutions automatically satisfy $\tfrac{1}{2}|\bar{v}|^2\leq \overline{e}$ a.e. If in addition \eqref{e:CR}
is an equality a.e. then $\bar{v}$ is a weak solution of the Euler equations.

A convenient way to express the inequality \eqref{e:CR} is obtained by introducing the {\it generalized energy density} 
\begin{equation*}
e(\bar{v},\bar{u})=\frac{d}{2}|\bar{v}\otimes \bar{v}-\bar{u}|_{\infty},
\end{equation*}
where $|\cdot|_{\infty}$ is the operator norm of the matrix ($=$ the largest eigenvalue for symmetric matrices). The inequality \eqref{e:CR} can then be equivalently written as 
\begin{equation}\label{e:CREuler1}
e(\bar{v},\bar{u})\leq \bar{e}\textrm{ a.e. }
\end{equation}

The key point of convex integration is that a {\em strict inequality} instead of \eqref{e:CR} gives enough room so that high-frequency oscillations can be ``added'' on top of the subsolution -- of course in a highly non-unique way -- so that one obtains weak solutions. It is important also to note that, since in the process of convex integration only compactly supported (in space-time) perturbations are added to the subsolution, the boundary and initial conditions of the weak solutions so obtained agree with the corresponding data of the subsolution. This is the content of the following theorem, which is essentially Proposition 2 from \cite{euler2}. 

\begin{thm}[Subsolution criterion]\label{t:criterion}
Let $\overline{e}\in L^{\infty}(\Omega_T)$ and $(\overline{v},\overline{u},\overline{q})$ 
be a subsolution. Furthermore, let $\mathcal{U}\subset\Omega_T$ a subdomain such that
$(\overline{v},\overline{u},\overline{q})$ and $\overline{e}$ are continuous on $\mathcal{U}$ and
\begin{equation}\label{e:strict}
\begin{split}
e(\overline{v},\overline{u})&<\overline{e}\qquad \textrm{ on }\mathcal{U}\\
e(\overline{v},\overline{u})&=\overline{e}\qquad \textrm{ a.e. }\Omega_T\setminus \mathcal{U}
\end{split}
\end{equation}
Then there exist infinitely many weak solutions $v\in L^{\infty}(0,T;H(\Omega))$
of the Euler equations such that
\begin{align*}
v&=\overline{v} \qquad \textrm{ a.e. $\Omega_T\setminus \mathcal{U}$,}\\
\tfrac{1}{2}|v|^2&=\overline{e} \qquad\textrm{ a.e. $\Omega_T$,}\\
p&=\overline{q}-\tfrac{2}{d}\overline{e} \qquad\textrm{ a.e. $\Omega_T$}.
\end{align*}
If in addition
\begin{equation}\label{e:initialdatum}
\overline{v}(\cdot,t)\rightharpoonup v_0(\cdot)\textrm{ in }L^2(\Omega)\textrm{ as }t\to 0 ,
\end{equation}
then $v$ solves the Cauchy problem \eqref{euler}.
\end{thm}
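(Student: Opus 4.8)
The plan is to prove this by the Baire-category form of convex integration from \cite{euler1,euler2}, arranged so that every perturbation is compactly supported in $\mathcal{U}$; this automatically preserves the values of $(\overline v,\overline u,\overline q)$ on $\Omega_T\setminus\mathcal{U}$ and, in particular, the boundary and initial data. The linear system \eqref{e:LR} is solved by a potential, i.e.\ there is a constant-coefficient differential operator on $\Omega_T$ whose range consists exactly of the fields satisfying \eqref{e:LR}; this is what lets one add oscillations without destroying the constraints. I then introduce the space of \emph{strict subsolutions matching the data outside $\mathcal{U}$},
\[
X_0=\bigl\{(v,u):\ (v,u)\ \text{solves}\ \eqref{e:LR}\ \text{for some}\ q,\ \text{is continuous on}\ \mathcal{U},\ (v,u)=(\overline v,\overline u)\ \text{on}\ \Omega_T\setminus\mathcal{U},\ e(v,u)<\overline e\ \text{on}\ \mathcal{U}\bigr\},
\]
which is nonempty since $(\overline v,\overline u)$ itself belongs to it by \eqref{e:strict}. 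Because $\overline e\in L^\infty(\Omega_T)$, the velocities $v$ all lie in a fixed ball of $L^2(\Omega_T)$, on which the weak-$L^2$ topology is metrized by some metric $d_X$; let $X$ be the closure of $X_0$ in $(L^2(\Omega_T),d_X)$. Then $X$ is a nonempty complete metric space, and every $v\in X$ satisfies \eqref{e:LR}, the inequality $e(v,u)\le\overline e$, and the matching condition on $\Omega_T\setminus\mathcal{U}$, the last two passing to weak limits by convexity and lower semicontinuity.

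The argument is driven by the functional
\[
J(v)=\int_{\mathcal{U}}\Bigl(\tfrac12\abs{v}^2-\overline e\Bigr)\,dx\,dt .
\]
Since $\tfrac12\abs{v}^2\le e(v,u)\le\overline e$ a.e., one has $J\le0$ on $X$, and $J(v)=0$ forces $\tfrac12\abs{v}^2=\overline e$ a.e.\ on $\mathcal{U}$. As $v\mapsto\int_{\mathcal{U}}\tfrac12\abs{v}^2$ is weakly lower semicontinuous, $J$ is a lower-semicontinuous, hence Baire-one, functional on the complete metric space $X$, so its set of points of continuity is residual. The whole proof thus reduces to showing $J(v)=0$ at every continuity point $v$: there $\tfrac12\abs{v}^2=\overline e$ a.e.\ on $\mathcal{U}$ and, by \eqref{e:strict}, on all of $\Omega_T$, whence $e(v,u)=\tfrac12\abs{v}^2=\overline e$ a.e.; since $u$ is traceless this forces $v\otimes v-u=\tfrac2d\overline e\,I$, so that \eqref{e:LR} becomes the weak momentum equation with pressure $p=\overline q-\tfrac2d\overline e$, and $v$ is a weak solution of \eqref{euler} as noted after \eqref{e:CR}.

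The crux, and the step I expect to be the main obstacle, is the \emph{perturbation property}: there is $c>0$ so that for every $(v,u)\in X_0$ with $J(v)\le-\beta<0$ and every $\eta>0$ one can find $(v+w,u+z)\in X_0$ with
\[
\int_{\mathcal{U}}\abs{w}^2\,dx\,dt\ge c\,\beta^2,\qquad d_X(v+w,v)<\eta,\qquad \Bigl|\int_{\mathcal{U}}v\cdot w\,dx\,dt\Bigr|<\eta .
\]
This is the genuine convex-integration step. It rests on the geometry of \eqref{e:LR}: its wave cone $\Lambda$ (the states admitting a plane-wave solution) is large, and the $\Lambda$-convexity analysis of the constraint set $\{e(v,u)=\overline e\}$ from \cite{euler2} provides, at each point, an admissible oscillation direction whose amplitude is bounded below by the pointwise gap $\overline e-\tfrac12\abs{v}^2$. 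Superposing such plane waves through the potential, localizing by a cutoff supported in $\mathcal{U}$, and running them at high frequency $N$ yields a smooth correction $(w,z)$ supported in $\mathcal{U}$ that still satisfies $e(v+w,u+z)<\overline e$ (the commutator errors from the cutoff being $O(1/N)$), that oscillates fast enough to make $v+w\rightharpoonup v$ and the cross term vanish (so the last two estimates hold once $N$ is large), and whose pointwise amplitude is bounded below by the gap; Jensen's inequality then turns this into the stated quadratic $L^2$-bound.

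Granting this, the Baire argument closes quickly. If some continuity point $v$ had $J(v)=-2\beta<0$, choose $v_k\in X_0$ with $v_k\to v$ and $J(v_k)\to-2\beta$ (so $J(v_k)\le-\beta$ eventually), and apply the perturbation property with $\eta=1/k$ to obtain $\tilde v_k=v_k+w_k\in X_0$ with $d_X(\tilde v_k,v)\to0$. Expanding the square and using the smallness of the cross term,
\[
J(\tilde v_k)=J(v_k)+\int_{\mathcal{U}}v_k\cdot w_k\,dx\,dt+\tfrac12\int_{\mathcal{U}}\abs{w_k}^2\,dx\,dt\ge J(v_k)+\tfrac{c}{2}\beta^2-\tfrac1k ,
\]
so $\liminf_k J(\tilde v_k)\ge -2\beta+\tfrac{c}{2}\beta^2>J(v)$ while $\tilde v_k\to v$, contradicting continuity of $J$ at $v$. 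Hence $J\equiv0$ on the residual set, producing weak solutions with $v=\overline v$ on $\Omega_T\setminus\mathcal{U}$ and $\tfrac12\abs{v}^2=\overline e$ a.e.; the perturbation property also shows $X$ has no isolated points, so this residual set is uncountable and the solutions are infinitely many. Finally, when \eqref{e:initialdatum} holds, the constructed $v$ can be taken in $C_w([0,T];L^2(\Omega))$ sharing the weak trace of $\overline v$ at $t=0$, so $v(\cdot,t)\rightharpoonup v_0$ as $t\to0$ and $v$ solves the Cauchy problem \eqref{euler}.
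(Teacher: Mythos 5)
Your proposal is correct and takes essentially the same route as the paper, which does not prove Theorem~\ref{t:criterion} directly but cites it as Proposition~2 of \cite{euler2} (see also Theorem~7 of \cite{szlecturenotes}): that proof is precisely the Baire-category convex-integration scheme you reconstruct, with the weakly closed space $X$ of strict subsolutions matching the data outside $\mathcal{U}$, the weakly lower-semicontinuous energy-gap functional whose continuity points form a residual set, and the localized plane-wave perturbation property with the quadratic lower bound. The one step you defer --- that perturbation lemma --- is exactly the geometric ingredient established in \cite{euler2}, so your sketch is faithful to the intended argument, including the passage to the Cauchy problem via perturbations compactly supported in $\Omega\times(0,T)$.
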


We also refer to \cite{szlecturenotes}, where a detailed discussion of the convex integration technique can be found - in particular the above theorem is Theorem 7 of \cite{szlecturenotes}.

\section{Non-Uniqueness for Rotational Initial Data}\label{nonuniqueness}

In this section we wish to apply the framework of Section \ref{s:subsolution} to prove Theorem \ref{wildrotation}. Thus, we set
$$
\Omega:=\{x\in\Real^2:\,\rho<|x|<R\}
$$
to be an annulus, fix $r_0\in (\rho,R)$ and set
\begin{equation}\label{e:v0}
v_0(x)=\begin{cases}-\frac{1}{|x|^3}x^\perp&|x|<r_0,\\ \frac{1}{|x|^3}x^{\perp}&|x|>r_0,\end{cases}
\end{equation}
where $x^\perp=\begin{pmatrix}x_2\\-x_1\end{pmatrix}$. We will construct subsolutions by a similar method as in \cite{vortexpaper}.

Owing to Theorem \ref{t:criterion} of the previous section, it suffices to show the existence of 
certain subsolutions. We fix two small constants $\lambda>0$ ("turbulent propagation speed") and $\epsilon\geq 0$ ("energy dissipation rate"), to be determined later.

We look for subsolutions $(\bar{v},\bar{u},\bar{q})$ (c.f. Definition \ref{d:subsolution} - the energy density function $\bar{e}$ is still to be fixed) of the form
\begin{equation*}
\bar{v}(x,t)=\alpha(r,t)\begin{pmatrix}\sin\theta\\-\cos\theta\end{pmatrix},
\end{equation*}
where $\alpha(r,0)=\alpha_0(r)$ and $(r,\theta)$ denotes polar coordinates on $\Real^2$,
\begin{equation}\label{defu}
\begin{aligned}
\bar{u}(x,t)&=\left(\begin{array}{cc}\cos\theta & \sin\theta\\ 
\sin\theta & -\cos\theta\end{array}\right)\left(\begin{array}{cc}\beta(r,t) & \gamma(r,t)\\ 
\gamma(r,t) & -\beta(r,t)\end{array}\right)\left(\begin{array}{cc}\cos\theta & \sin\theta\\ 
\sin\theta & -\cos\theta\end{array}\right)\\
&=\left(\begin{array}{cc}\beta\cos(2\theta)+\gamma\sin(2\theta) & \beta\sin(2\theta)-\gamma\cos(2\theta)\\ 
\beta\sin(2\theta)-\gamma\cos(2\theta) & -\beta\cos(2\theta)-\gamma\sin(2\theta)\end{array}\right),
\end{aligned}
\end{equation}
and
\begin{equation*}
\bar{q}=\bar{q}(r).
\end{equation*}

As a side remark, note that the choice $\alpha(r,t)=\alpha_0(r)$ for all $t\geq0$, $\beta=-\frac{1}{2}\alpha^2$, $\gamma=0$, and 
\begin{equation}\label{pressure}
\bar{q}(r)=\frac{1}{2}\alpha^2+\int_{\rho}^r\frac{\alpha(s)^2}{s}ds
\end{equation}
yields the well-known stationary solution (the integral in the formula for $\bar{q}$ represents the physical pressure).

We insert this ansatz into \eqref{e:LR} to arrive at two equations. More precisely, using the formulas $\nabla_xr=\begin{pmatrix}\cos\theta\\ \sin\theta\end{pmatrix}$ and $\nabla_x\theta=\frac{1}{r}\begin{pmatrix}-\sin\theta\\ \cos\theta\end{pmatrix}$, we obtain
\begin{equation*}
\begin{aligned}
\partial_t\alpha\sin\theta &+\partial_r\beta\left[\cos\theta\cos(2\theta)+\sin\theta\sin(2\theta)\right]+\partial_r\gamma\left[\cos\theta\sin(2\theta)-\sin\theta\cos(2\theta)\right]\\
&+\frac{2}{r}\beta\left[\sin\theta\sin(2\theta)+\cos\theta\cos(2\theta)\right]+\frac{2}{r}\gamma\left[-\sin\theta\cos(2\theta)+\cos\theta\sin(2\theta)\right]\\
&+\partial_r\bar{q}\cos\theta=0
\end{aligned}
\end{equation*}
and
\begin{equation*}
\begin{aligned}
-\partial_t\alpha\cos\theta &+\partial_r\beta\left[\cos\theta\sin(2\theta)-\sin\theta\cos(2\theta)\right]+\partial_r\gamma\left[-\cos\theta\cos(2\theta)-\sin\theta\sin(2\theta)\right]\\
&+\frac{2}{r}\beta\left[-\sin\theta\cos(2\theta)+\cos\theta\sin(2\theta)\right]+\frac{2}{r}\gamma\left[-\sin\theta\sin(2\theta)-\cos\theta\cos(2\theta)\right]\\
&+\partial_r\bar{q}\sin\theta=0.
\end{aligned}
\end{equation*}
If we multiply the first equation by $\sin\theta$ and add it to the second one multiplied by $\cos\theta$, use the identities $\cos^2\theta-\sin^2\theta=\cos(2\theta)$ and $2\sin\theta\cos\theta=\sin(2\theta)$, and then separate by terms involving $\sin(2\theta)$ and $\cos(2\theta)$, respectively, we will eventually get the two equations
\begin{equation}\label{preburgers}
\begin{aligned}
\partial_r\beta+\frac{2}{r}\beta+\partial_r\bar{q}&=0\\
\partial_t\alpha+\partial_r\gamma+\frac{2}{r}\gamma&=0.
\end{aligned}
\end{equation}
It can be easily verified that these equations are equivalent to the original system \eqref{e:LR} for our ansatz.

If we set $\bar{q}(r)$ as in (\ref{pressure}) and $\beta=-\frac{1}{2}\alpha^2$, the first equation will be satisfied, in nice analogy with \cite{vortexpaper} (up to a sign). Also, the second equation is similar to \cite{vortexpaper}, but it involves the additional ``centrifugal'' term $\frac{2}{r}\gamma$. Therefore, we cannot simply set $\gamma=\frac{1}{2}\alpha^2$ as in \cite{vortexpaper} to obtain Burgers' equation.
However, observing that $\partial_r(r^2\gamma)=2r\gamma+r^2\partial_r\gamma$, we set 
$$
\alpha(r,t)=\frac{1}{r^2}f(r,t)
$$ 
and 
\begin{equation}\label{gamma}
\gamma=-\frac{\lambda}{2r^2}(1-f^2)=-\frac{\lambda}{2}\left(\frac{1}{r^2}-r^2\alpha^2\right),
\end{equation} 
so that the second equation in (\ref{preburgers}), after multiplication by $r^2$, turns into Burgers' equation
\begin{equation}\label{burgers}
\partial_tf+\frac{\lambda}{2}\partial_r(f^2)=0.
\end{equation}
The initial data (\ref{initial}) for $\alpha$ then
corresponds to
\begin{equation*}
f(r,0)=\begin{cases}-1 &\text{if $\rho<r<r_0$}\\
1 &\text{if $r_0<r<R$}.
\end{cases}
\end{equation*}
Then, for this data, Burgers' equation (\ref{burgers}) has a rarefaction wave solution for $t\in[0,T]$, provided $\lambda>0$ is sufficiently small (depending on $T$ and $\rho<r_0<R$), which can be explicitly written as
\begin{equation}\label{entropy}
f(r,t)=\begin{cases}-1 &\text{if $\rho<r<r_0-\lambda t$}\\
\frac{r-r_0}{\lambda t} &\text{if $r_0-\lambda t<r<r_0+\lambda t$}\\
1 &\text{if $r_0+\lambda t<r<R$.}
\end{cases}
\end{equation}

Therefore, by setting $\alpha(r,t)=\frac{1}{r^2}f(r,t)$ for $f$ as in (\ref{entropy}), $\beta=-\frac{1}{2}\alpha^2$, $\gamma$ as in (\ref{gamma}), and $\bar{q}$ as in (\ref{pressure}), we obtain a solution of the equations \eqref{e:LR} with initial data corresponding to \eqref{e:v0}. 

It remains to study the generalized energy. Since $\bar{u}$ is given by (\ref{defu}) and moreover
\begin{equation*}
\begin{aligned}
\bar{v}\otimes \bar{v}&=\alpha(r,t)^2\left(\begin{array}{cc}\cos^2\theta & -\sin\theta\cos\theta\\ 
-\sin\theta\cos\theta & \cos^2\theta\end{array}\right)\\
&=\left(\begin{array}{cc}\cos\theta & \sin\theta\\ 
\sin\theta & -\cos\theta\end{array}\right)\left(\begin{array}{cc}0 & 0\\ 
0 & \alpha(r,t)^2\end{array}\right)\left(\begin{array}{cc}\cos\theta & \sin\theta\\ 
\sin\theta & -\cos\theta\end{array}\right),
\end{aligned}
\end{equation*}  
and since the eigenvalues of a matrix are invariant under conjugation by an orthogonal transformation, in order to determine $e(\bar{v},\bar{u})=|\bar{v}\otimes \bar{v}-\bar{u}|_{\infty}$ it suffices to find the largest eigenvalue of
\begin{equation*}
\left(\begin{array}{cc}-\beta & -\gamma\\
-\gamma & \alpha^2+\beta
\end{array}\right)=\left(\begin{array}{cc}\frac{1}{2}\alpha^2 & \frac{\lambda}{2}\left(\frac{1}{r^2}-r^2\alpha^2\right)\\
\frac{\lambda}{2}\left(\frac{1}{r^2}-r^2\alpha^2\right) & \frac{1}{2}\alpha^2
\end{array}\right).
\end{equation*}
It is easily calculated, taking into account $|\alpha|\leq\frac{1}{r^2}$ and $\lambda\geq0$, that
\begin{equation}\label{genenergy}
\begin{aligned}
e(\bar{v},\bar{u})&=\frac{1}{2}\alpha^2+\frac{\lambda}{2}\left(\frac{1}{r^2}-r^2\alpha^2\right)\\
&=\frac{1}{2r^4}\left[1-(1-r^2\lambda)\left(1-f(r,t)^2\right)\right].
\end{aligned}
\end{equation}
Finally, we set 
\begin{equation*}
\bar{e}(r,t)=\frac{1}{2r^4}\left[1-\epsilon(1-r^2\lambda)\left(1-f(r,t)^2\right)\right]\,,
\end{equation*}
where $\epsilon$ is sufficiently small so that $\bar{e}>0$.
Observe that 
$$
e(\bar{v},\bar{u})\leq\bar{e}\leq\frac{1}{2}|v_0|^2\qquad\textrm{ in }\Omega_T.
$$
More precisely, we have the following result, summarizing the calculations in this section:

\begin{prop}\label{p:subsol}
For any choice of constants $\epsilon,\lambda$ satisfying 
\begin{align*}
0&<\lambda<\min\left\{\frac{1}{R^2},\frac{r_0-\rho}{T},\frac{R-r_0}{T}\right\},\\
0&\leq \epsilon< \frac{1}{1-\rho^2\lambda}\,
\end{align*}
there exists a subsolution $(\bar{v},\bar{u},\bar{q})$ in $\Omega_T$ with respect to the kinetic energy density 
\begin{equation*}
\bar{e}(r,t)=\frac{1}{2r^4}\left[1-\epsilon(1-r^2\lambda)\left(1-f(r,t)^2\right)\right]
\end{equation*}
and with initial data $\bar{v}(x,0)=v_0(x)$ from \eqref{e:v0}, such that, with
\begin{equation*}
\mathcal{U}:=\left\{x\in\Real^2:\,r_0-\lambda t<|x|<r_0+\lambda t\right\}
\end{equation*}
we have
\begin{align*}
&e(\bar{v},\bar{u})<\bar{e}\qquad\textrm{ in }\mathcal{U},\\
&e(\bar{v},\bar{u})=\bar{e}\qquad\textrm{ in }\Omega_T\setminus\mathcal{U}.
\end{align*}
\end{prop}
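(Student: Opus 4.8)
The plan is to exhibit the subsolution via the ansatz of this section and then verify Definition \ref{d:subsolution} together with the two conclusions directly from the formulas already assembled. Concretely, I would take $\alpha(r,t)=\tfrac{1}{r^2}f(r,t)$ with $f$ the rarefaction wave \eqref{entropy}, $\beta=-\tfrac12\alpha^2$, $\gamma$ as in \eqref{gamma}, $\bar q$ as in \eqref{pressure}, and let $\bar v,\bar u$ be given by the displayed ansatz. Because $|f|\le1$ one has $|\bar v|=|\alpha|\le\rho^{-2}$, and $\bar v$ is tangent to the circles $\{|x|=\mathrm{const}\}$, so $\bar v\in L^\infty(0,T;H(\Omega))$; evaluating at $t=0$ recovers \eqref{e:v0}, which also gives the weak continuity \eqref{e:initialdatum} needed when Theorem \ref{t:criterion} is applied.

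The first task is the linear system \eqref{e:LR}. For this I would invoke the angular separation already performed, which reduces \eqref{e:LR} for this ansatz to the two scalar identities \eqref{preburgers}. The first holds by construction, since differentiating \eqref{pressure} gives $\partial_r\bar q=\partial_r(\tfrac12\alpha^2)+\tfrac{\alpha^2}{r}=-\partial_r\beta-\tfrac2r\beta$. For the second, the choice \eqref{gamma} together with $\partial_r(r^2\gamma)=2r\gamma+r^2\partial_r\gamma$ converts it, after multiplication by $r^2$, into Burgers' equation \eqref{burgers}; it therefore suffices that $f$ solve \eqref{burgers} distributionally. The rarefaction wave \eqref{entropy} is Lipschitz for $t>0$ and is precisely the entropy solution of the corresponding Riemann problem, hence a distributional solution, and the bounds $\lambda<(r_0-\rho)/T$, $\lambda<(R-r_0)/T$ keep the fan compactly inside the annulus for all $t\in[0,T]$. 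I would then note that no singular contribution arises at the two edges of the fan, where $\partial_r f$ jumps, because $\alpha,\beta,\gamma,\bar q$ are all continuous there.

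The second task is the generalized energy. Using the conjugation-invariance of eigenvalues and the diagonalization of $\bar v\otimes\bar v$ recorded above, $e(\bar v,\bar u)$ is the largest eigenvalue of the symmetric matrix appearing just before \eqref{genenergy}, whose diagonal entries both equal $\tfrac12\alpha^2$ and whose off-diagonal entry is $-\gamma=\tfrac{\lambda}{2}(\tfrac1{r^2}-r^2\alpha^2)$. The bound $|f|\le1$ (equivalently $|\alpha|\le r^{-2}$) is what makes this off-diagonal entry nonnegative and thereby selects $\tfrac12\alpha^2-\gamma$ as the \emph{larger} eigenvalue, producing \eqref{genenergy}. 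Subtracting the prescribed density yields the single identity $\bar e-e(\bar v,\bar u)=\tfrac{1}{2r^4}(1-\epsilon)(1-r^2\lambda)(1-f^2)$. Since $\lambda<R^{-2}$ forces $1-r^2\lambda>0$ on $(\rho,R)$, and $1-f^2\ge0$ with strict positivity exactly on the fan $\mathcal U$ (while $f=\pm1$ and hence $1-f^2=0$ outside), this identity delivers both conclusions at once as soon as $\epsilon<1$. For such $\epsilon$ the positivity of $\bar e$ is automatic, since $(1-r^2\lambda)(1-f^2)\le1-\rho^2\lambda$ gives $\epsilon(1-r^2\lambda)(1-f^2)<1$, which is the content of the stated bound $\epsilon<(1-\rho^2\lambda)^{-1}$.

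The individual computations are routine, so the main obstacle is conceptual rather than analytic. The one genuinely load-bearing input is the pointwise bound $|f|\le1$, inherited from the maximum principle for the rarefaction wave: it simultaneously guarantees $\bar v\in H(\Omega)$, fixes the correct choice of largest eigenvalue in $e(\bar v,\bar u)$, and gives $\gamma$ its sign. The only other point requiring care is that \eqref{e:LR} must hold \emph{distributionally} across the corners of the rarefaction fan, which I would settle by observing that the candidate fields are continuous there, so the weak formulation of \eqref{burgers} transfers to \eqref{e:LR} with no boundary terms.
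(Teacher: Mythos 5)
Your proposal is correct and follows essentially the same route as the paper: the same ansatz reducing \eqref{e:LR} to \eqref{preburgers}, the same choices of $\beta$, $\gamma$, $\bar q$ turning the second equation into Burgers' equation solved by the rarefaction wave \eqref{entropy}, and the same eigenvalue computation leading to \eqref{genenergy} and the identity $\bar e-e(\bar v,\bar u)=\tfrac{1}{2r^4}(1-\epsilon)(1-r^2\lambda)(1-f^2)$. You also correctly isolate $\epsilon<1$ as the operative constraint for the strict inequality on $\mathcal U$ (note that the range stated in the proposition, $\epsilon<(1-\rho^2\lambda)^{-1}$, only guarantees $\bar e>0$ and slightly exceeds $1$), and your attention to the distributional validity across the corners of the fan and to the sign of $\gamma$ matches the load-bearing points of the paper's argument.
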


We can now conclude with the proof of Theorem \ref{wildrotation}. 

\begin{proof}[Proof of Theorem \ref{wildrotation}]
We apply Proposition \ref{p:subsol} above with $\epsilon\geq 0$ to obtain a subsolution $(\bar{v},\bar{u},\bar{q})$. According to Theorem \ref{t:criterion} with this subsolution, there exist infinitely many weak solutions $v\in L^{\infty}(0,T;H(\Omega))$ such that $|v|^2=2\bar{e}$ almost everywhere in $\Omega_T$ and with initial data $v_0$. To check that these are admissible, observe that
$$
\int_{\Omega}|v(x,t)|^2\,dx=\int_{\Omega}2\bar{e}(x,t)\,dx\leq \frac{1}{|x|^4}\,dx=\int_{\Omega}|v_0(x)|^2\,dx.
$$
Finally, observe that we obtain strictly energy-decreasing solutions by choosing $\epsilon>0$ and energy-conserving solutions for $\epsilon=0$.
\end{proof}

\section{Uniqueness of the Viscosity Limit}\label{viscosity}

\begin{proof}[Proof of Proposition \ref{introuniqueness}]
Consider the Navier-Stokes equations with viscosity $\epsilon>0$:
\begin{equation}\label{navierstokes}
\begin{aligned}
\partial_tv_{\epsilon}+v_{\epsilon}\cdot\nabla v_{\epsilon}+\nabla p_{\epsilon}&=\epsilon\Delta v_{\epsilon}\\
\operatorname{div}v_{\epsilon}&=0\\
v_{\epsilon}(\cdot,0)&=v_0\\
v_{\epsilon}|_{\partial\Omega}&=0.
\end{aligned}
\end{equation}
It is known that the Navier-Stokes equations in two space dimensions admit a unique weak solution (the Leray-Hopf solution) which satisfies the energy equality
\begin{equation*}
\frac{1}{2}\int_{\Omega}|v_{\epsilon}(x,t)|^2dx+\epsilon\int_0^t\int_{\Omega}|\nabla v_{\epsilon}(x,s)|^2dxds=\frac{1}{2}\int_{\Omega}|v_0(x)|^2dx
\end{equation*}
for every $t\in[0,T]$, see e.g. \cite{galdi} for details. It turns out that if the initial data $v_0$ has the rotational symmetry in \eqref{rotational}, then the (unique) Leray-Hopf solution will have the same symmetry. 

To show this, we take the ansatz 
\begin{equation}\label{ansatz}
v_{\epsilon}(x,t)=\alpha_{\epsilon}(r,t)\begin{pmatrix}\sin\theta\\-\cos\theta\end{pmatrix}
\end{equation}
and $p_{\epsilon}=p_{\epsilon}(r)$, again using polar coordinates. Insertion of this ansatz into the first equation of (\ref{navierstokes}) yields
\begin{equation*}
\begin{aligned}
\partial_t\alpha_{\epsilon}\sin\theta&-\frac{\alpha_{\epsilon}^2}{r}\cos\theta+\partial_rp_{\epsilon}\cos\theta\\
&=\epsilon\left(\frac{\partial_r\alpha_{\epsilon}}{r}+\partial_r^2\alpha_{\epsilon}-\frac{\alpha_{\epsilon}}{r^2}\right)\sin\theta\,.
\end{aligned}
\end{equation*}
If we choose 
\begin{equation*}
p_{\epsilon}(r)=\int_{\rho}^r\frac{\alpha_{\epsilon}(s)^2}{s}ds
\end{equation*}
and divide by $\sin\theta$, we end up with the parabolic equation
\begin{equation}\label{parabolic}
\partial_t\alpha_{\epsilon}=\epsilon\left(\frac{\partial_r\alpha_{\epsilon}}{r}+\partial_r^2\alpha_{\epsilon}-\frac{\alpha_{\epsilon}}{r^2}\right).
\end{equation}
Insertion of our ansatz into the second equation of (\ref{navierstokes}) also gives (\ref{parabolic}), as one can easily check by a similar computation. Moreover, the divergence-free condition is automatically satisfied, the initial condition becomes
\begin{equation}\label{initialparabolic}
\alpha_{\epsilon}(\cdot,0)=\alpha_0
\end{equation}
with $\alpha_0$ defined by (\ref{initial}), and the boundary condition translates into 
\begin{equation}\label{boundaryparabolic}
\alpha_{\epsilon}(\rho)=\alpha_{\epsilon}(R)=0.
\end{equation}
Thus we obtain the well-posed parabolic initial and boundary value problem (\ref{parabolic}), (\ref{initialparabolic}), (\ref{boundaryparabolic}). By well-known results (cf. e.g. \cite{evans}, Section 7.1), this parabolic problem admits, for each $\epsilon>0$, a unique weak solution. But our calculations so far show that, if $\alpha_{\epsilon}$ is a solution to the parabolic problem, then the corresponding $v_{\epsilon}$ defined by (\ref{ansatz}) is the (unique) Leray-Hopf solution of the Navier-Stokes problem (\ref{navierstokes}), and at the same time it satisfies the initial and boundary value problem for the heat equation:
\begin{equation*}
\begin{aligned}
\partial_tv_{\epsilon}&=\epsilon\Delta v_{\epsilon}\\
\operatorname{div}v_{\epsilon}&=0\\
v_{\epsilon}(\cdot,0)&=v_0\\
v_{\epsilon}|_{\partial\Omega}&=0.
\end{aligned}
\end{equation*} 
Since the solutions of the heat equation converge strongly to the stationary solution, and since we have shown that for our particular initial data the heat equation coincides with the Navier-Stokes equations, the proposition is thus proved. 
\end{proof}
\begin{rem}
The previous discussion can be extended to initial data on a cylinder of the form $Z=\Omega\times\mathbb{T}\subset\Real^2\times\mathbb{T}$, where $\Omega\subset\Real^2$ is still the annulus. Indeed, for so-called 2 1/2 dimensional initial data $V_0(x_1,x_2)=(v_0(x_1,x_2),w(x_1,x_2))$ on $Z$, where $v_0$ is as in (\ref{rotational}), there may exist infinitely many admissible weak solutions, but only the solution given by
\begin{equation*}
V(x_1,x_2,t)=(v_0(x_1,x_2),w(x_1-(v_0)_1t,x_2-(v_0)_2t))
\end{equation*}
arises as a viscosity limit. We omit details, but remark that this can be shown along the lines of \cite{shearflow}, where a similar analysis was carried out for the case of shear flows.
\end{rem}

\section{Dissipative Solutions}\label{dissipative}
Let $S(w)=\frac{1}{2}(\nabla w+\nabla w^t)$ denote the symmetric gradient of a vectorfield $w$, and set 
\begin{equation*}
E(w)=-\partial_tw-P(w\cdot\nabla w),
\end{equation*}
with $P$ denoting the Leray-Helmholtz projection onto $H(\Omega)$. 
  
The following definition is from \cite{lions}, given here in the version of \cite{bardostiti} for bounded domains. The reader may consult these references also for a motivation of the definition.
\begin{defn}
Let $\Omega$ be a bounded domain with $C^1$ boundary. A vectorfield $v\in C([0,T];H_w(\Omega))$ is said to be a \emph{dissipative solution} of the Euler equations (\ref{euler}) if for every divergence-free test vectorfield $w\in C^1(\overline{\Omega}\times[0,T])$ with $w\cdot\nu\restriction_{\partial\Omega}=0$ one has
\begin{equation}\label{defdissipative}
\begin{aligned}
\int_{\Omega}|v-w|^2dx&\leq\operatorname{exp}\left(2\int_0^t\norm{S(w)}_{\infty}ds\right)\int_{\Omega}|v(x,0)-w(x,0)|^2dx\\
+&2\int_0^t\int_{\Omega}\operatorname{exp}\left(2\int_s^t\norm{S(w)}_{\infty}d\tau\right)E(w)\cdot(v-w)dxds
\end{aligned}
\end{equation}
for all $t\in[0,T]$.
\end{defn}
An immediate consequence of this definition is the weak-strong uniqueness (Proposition 4.1 in \cite{lions}):
\begin{prop}\label{weak-strong}
Suppose there exists a solution $v\in C^1(\overline{\Omega}\times[0,T])$ of the Euler equations (\ref{euler}). Then $v$ is unique in the class of dissipative solutions with the same initial data.
\end{prop}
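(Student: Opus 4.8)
The plan is to test the defining inequality \eqref{defdissipative} of a dissipative solution against the strong solution itself, i.e.\ to choose $w=v$. First I would verify that $v$ is an admissible test field: by hypothesis $v\in C^1(\overline{\Omega}\times[0,T])$, it is divergence-free because it solves \eqref{euler}, and it satisfies $v\cdot\nu|_{\partial\Omega}=0$, which is exactly the kinematic boundary condition. Thus $w=v$ is eligible in \eqref{defdissipative}.

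The decisive point is to show $E(v)=0$. Since $v$ is a classical Euler solution, $\partial_t v+v\cdot\nabla v=-\nabla p$. Applying the Leray--Helmholtz projection $P$ and using that $P$ annihilates gradients yields $P(v\cdot\nabla v)=-P(\partial_t v)$. Differentiating $\operatorname{div}v=0$ and $v\cdot\nu|_{\partial\Omega}=0$ in time shows that $\partial_t v$ is itself divergence-free and tangent to $\partial\Omega$, so $\partial_t v\in H(\Omega)$ and hence $P(\partial_t v)=\partial_t v$. Therefore $P(v\cdot\nabla v)=-\partial_t v$, and $E(v)=-\partial_t v-P(v\cdot\nabla v)=0$.

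With these two observations, let $\tilde v$ be any dissipative solution with the same initial datum $v_0$, and substitute $w=v$ into \eqref{defdissipative}. The integral involving $E(v)$ on the right-hand side vanishes identically, while the remaining term vanishes because $\tilde v(\cdot,0)=v(\cdot,0)=v_0$. The inequality then reduces to $\int_{\Omega}|\tilde v(x,t)-v(x,t)|^2\,dx\le 0$ for every $t\in[0,T]$, which forces $\tilde v=v$ almost everywhere and establishes uniqueness. Since $E(v)=0$, the exponential Gronwall weights already built into \eqref{defdissipative} play no role; the conclusion is immediate once both right-hand terms are seen to vanish.

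I expect the only genuine work to lie in the identity $E(v)=0$, and within it the claim $P(\partial_t v)=\partial_t v$: this hinges on $\partial_t v$ belonging to $H(\Omega)$, which combines the time-differentiated incompressibility and boundary conditions with the fact that $P$ acts as the identity on $H(\Omega)$. The remaining steps amount to a direct substitution.
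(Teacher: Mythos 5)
Your proof is correct and follows exactly the route the paper takes: the paper's own justification is the single remark that the result ``follows simply by choosing $w=v$ as a test function in the definition of dissipative solutions.'' You have merely filled in the details the paper leaves implicit --- the eligibility of $v$ as a test field, the computation $E(v)=0$ via $P(\partial_t v)=\partial_t v$, and the vanishing of the initial term --- all of which are accurate.
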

This follows simply by choosing $w=v$ as a test function in the definition of dissipative solutions.

Next, we prove Corollary \ref{cor}, showing that admissible solutions may fail to be unique in bounded domains even for smooth initial data.  

\begin{proof}
Recall the construction from Section \ref{nonuniqueness} and define
\begin{equation*}
\tilde{\Omega}=\{x\in\Real^2: \rho<|x|<r_0\}\subset \Omega.
\end{equation*}
It follows immediately from the definition that the restriction of a subsolution to a subdomain is itself a subsolution. Therefore we may consider the subsolution $(\bar{v},\bar{u},\bar{q})$ constructed in Section \ref{nonuniqueness} as a subsolution on $\tilde\Omega$ with energy density $\bar{e}$ as in Proposition \ref{p:subsol}, with initial data given by
$$
\bar{v}(x,0)=-\frac{x^\perp}{|x|^3}\quad\textrm{ for }x\in\tilde\Omega
$$
(c.f. \eqref{e:v0}). Applying this time Theorem \ref{t:criterion} in $\tilde\Omega$ with this subsolution yields infinitely many admissible weak solutions as in the proof of Theorem \ref{wildrotation}.

Since the initial data $\bar{v}(x,0)$ is smooth on $\tilde\Omega$, there exists a unique strong solution (indeed, this is the stationary solution). Thus weak-strong uniqueness fails, a fortiori implying that the non-stationary weak admissible solutions are not dissipative in the sense of Lions.
\end{proof}

\section{A Criterion for Admissible Solutions to be Dissipative}\label{holder}
We have seen that, on bounded domains, an admissible weak solution may fail to be dissipative. However this will not happen provided such a solution is H\"{o}lder continuous near the boundary of the domain, as claimed in Theorem \ref{criterion} above. The aim of this last section is to prove this theorem. We follow Appendix B of \cite{euler2}, but have to take into account that we need to deal with test functions which are not necessarily compactly supported in $\Omega$ in the definition of dissipative solutions.

So let $\Omega$ be a bounded domain in $\Real^2$ with $C^2$ boundary and $v$ an admissible weak solution of the Euler equations (\ref{euler}) as in the statement of Theorem \ref{criterion}. Assume for the moment that for every divergence-free $w\in C^1(\overline{\Omega}\times[0,T])$ satisfying the boundary condition we have
\begin{equation}\label{identity}
\frac{d}{dt}\int_{\Omega}v\cdot wdx=\int_{\Omega}\left(S(w)(v-w)\cdot(v-w)-E(w)\cdot v\right)dx
\end{equation}    
in the sense of distributions, where $E(w)$ is the quantity defined at the beginning of Section \ref{dissipative}. We claim that (\ref{identity}) implies already that $v$ is a dissipative solution. Indeed this can be shown exactly as in \cite{euler2}: On the one hand, since $v$ is admissible, 
\begin{equation}\label{identity2}
\frac{d}{dt}\int_{\Omega}|v|^2dx\leq0
\end{equation}
in the sense of distributions. On the other hand, using the definition of $E(w)$ and the identity $\int_{\Omega}(w\cdot\nabla w)\cdot wdx=0$ (which follows from $w\cdot\nu\restriction_{\partial\Omega}=0$), we have
\begin{equation}\label{identity3}
\frac{d}{dt}\int_{\Omega}|w|^2dx=-2\int_{\Omega}E(w)\cdot wdx.
\end{equation}
Since
\begin{equation*}
\int_{\Omega}|v-w|^2dx=\int_{\Omega}|v|^2dx+\int_{\Omega}|w|^2dx-2\int_{\Omega}v\cdot wdx,
\end{equation*}
we infer from this together with (\ref{identity}), (\ref{identity2}), and (\ref{identity3}) that
\begin{equation*}
\begin{aligned}
\frac{d}{dt}\int_{\Omega}|v-w|^2dx&\leq2\int_{\Omega}\left(E(w)\cdot(v-w)-S(w)(v-w)\cdot(v-w)\right)dx\\
&\leq2\int_{\Omega}E(w)\cdot(v-w)dx+2\norm{S(w)}_{\infty}\int_{\Omega}|v-w|^2dx
\end{aligned}
\end{equation*}
in the sense of distributions. We can then apply Gr\"{o}nwall's inequality as in \cite{euler2} to obtain (\ref{defdissipative}) for every $t\in[0,T]$. Therefore, it remains to prove (\ref{identity}) for every test function $w$.

In \cite{euler2}, identity (\ref{identity}) is proved for the case that $w$ is compactly supported in $\Omega$ at almost every time (see the considerations after equality (96) in \cite{euler2}). Let now $w\in C^1(\overline{\Omega}\times[0,T])$ be a divergence-free vectorfield with $w\cdot\nu|_{\partial\Omega}=0$, which does not necessarily have compact support in space. We will suitably approximate $w$ by vectorfields that do have compact support, much in the spirit of T. Kato \cite{kato} (in particular Section 4 therein).     

Assume for the moment that $\Omega$ is simply connected, so that $\partial\Omega$ has only one connected component. Since $w$ is divergence-free, there exists a function $\psi\in C([0,T];C^2(\overline{\Omega}))\cap C^1(\overline{\Omega}\times[0,T])$ such that 
\begin{equation*}
w(x,t)=\nabla^{\perp}\psi(x,t)
\end{equation*}
and $\psi\restriction_{\partial\Omega}=0$.
Let now $\chi:[0,\infty)\rightarrow\Real$ be a nonnegative smooth function such that
\begin{equation*}
\chi(s)=\begin{cases}0 & \text{if $s<1$}\\
1 & \text{if $s>2$}
\end{cases}
\end{equation*}
and set 
\begin{equation*}
w_{\epsilon}(x,t)=\nabla^{\perp}\left(\chi\left(\frac{\operatorname{dist}(x,\partial\Omega)}{\epsilon}\right)\psi(x,t)\right).
\end{equation*}
Then, by Lemma 14.16 in \cite{gilbargtrudinger}, there exists $\eta>0$ depending on $\Omega$ such that $x\mapsto\operatorname{dist}(x,\partial\Omega)$ is $C^2$ on 
\begin{equation}\label{boundarystrip}
\Gamma_{\eta}=\{x\in\overline{\Omega}:\operatorname{dist}(x,\partial\Omega)<\eta\},
\end{equation}
and hence $w_{\epsilon}\in C^1_c(\Omega\times[0,T])$ for sufficiently small $\epsilon>0$. Therefore, (\ref{identity}) is true for $w_{\epsilon}$:
\begin{equation}\label{identityeps}
\frac{d}{dt}\int_{\Omega}v\cdot w_{\epsilon}dx=\int_{\Omega}\left(S(w_{\epsilon})(v-w_{\epsilon})\cdot(v-w_{\epsilon})-E(w_{\epsilon})\cdot v\right)dx.
\end{equation}   
We will now let $\epsilon$ tend to zero in order to recover (\ref{identity}). 

Writing $d(x)=\operatorname{dist}(x,\partial\Omega)$, we have from the definition of $w_{\epsilon}$:
\begin{equation}\label{productrule}
w_{\epsilon}=\chi\left(\frac{d}{\epsilon}\right)\nabla^{\perp}\psi+\frac{1}{\epsilon}\chi'\left(\frac{d}{\epsilon}\right)\psi\nabla^{\perp}d,
\end{equation}
and since $\psi\in C([0,T];C^2(\overline{\Omega}))$ and $\psi\restriction_{\partial\Omega}=0$, there is a constant $C$ independent of $t$ and $\epsilon$ such that
\begin{equation*}
|\psi(x,t)|\leq Cd(x)
\end{equation*}
for all $x\in\overline{\Omega}$. Moreover, as the support of $\chi'\left(\frac{\cdot}{\epsilon}\right)$ is contained in $(\epsilon,2\epsilon)$, and as $|\nabla d|\leq1$, it follows from (\ref{productrule}) that
\begin{equation}\label{strong}
w_{\epsilon}\to w\hspace{0.3cm}\text{strongly in $L^{\infty}([0,T];L^2(\Omega))$}
\end{equation}
as $\epsilon\to0$. For the left hand side of (\ref{identityeps}) this immediately implies
\begin{equation*}
\frac{d}{dt}\int_{\Omega}v\cdot w_{\epsilon}dx\to\frac{d}{dt}\int_{\Omega}v\cdot wdx
\end{equation*}
in the sense of distributions. Moreover, the right hand side of (\ref{identityeps}) can be written, recalling the definition of $E(w_{\epsilon})$, as
\begin{equation*}
\begin{aligned}
\int_{\Omega}&\left(S(w_{\epsilon})(v-w_{\epsilon})\cdot(v-w_{\epsilon})-E(w_{\epsilon})\cdot v\right)dx\\
&=\int_{\Omega}\left[\partial_tw_{\epsilon}\cdot v+(v\cdot\nabla w_{\epsilon})\cdot v-((v-w_{\epsilon})\cdot\nabla w_{\epsilon})\cdot w_{\epsilon}\right]dx,
\end{aligned}
\end{equation*}
and the right hand side of (\ref{identity}) is given by a similar expression.
 
Next, observe that, again by (\ref{strong}),
\begin{equation*}
\int_{\Omega}\partial_tw_{\epsilon}\cdot vdx\to\int_{\Omega}\partial_tw\cdot vdx
\end{equation*}
in the sense of distributions and also that
\begin{equation*}
\int_{\Omega}((v-w_{\epsilon})\cdot\nabla w_{\epsilon})\cdot w_{\epsilon}dx=\int_{\Omega}((v-w)\cdot\nabla w)\cdot wdx=0
\end{equation*}
thanks to the formula $(v-w)\cdot\nabla w)\cdot w=(v-w)\cdot\frac{1}{2}\nabla|w|^2$ and the fact that $v-w\in H(\Omega)$ (and similarly for $((v-w_{\epsilon})\cdot\nabla w_{\epsilon})\cdot w_{\epsilon}$).

To complete the proof of (\ref{identity}) and therefore of Theorem \ref{criterion}, it remains to show that
\begin{equation}\label{problemterm}
\int_{\Omega}(v\cdot\nabla w_{\epsilon})\cdot vdx\to\int_{\Omega}(v\cdot\nabla w)\cdot vdx
\end{equation}
in the sense of distributions as $\epsilon\to0$. 

To this end, note that for every $x\in\Omega$ sufficiently close to $\partial\Omega$ there exists a unique closest point $\hat{x}\in\partial\Omega$, and then 
\begin{equation*}
x=\hat{x}+d(x)\nu(\hat{x}).
\end{equation*} 
We denote by $\tau(\hat{x})=\left(-\nu_2(\hat{x}),\nu_1(\hat{x})\right)$ the unit vector at $\hat{x}$ tangent to $\partial\Omega$ and use the notation
$v_{\tau}(x)=v(x)\cdot\tau(\hat{x})$, $\partial_{\tau}w_{\nu}(x)=\nabla w_{\nu}(x)\cdot\tau(\hat{x})$, etc. (recall that $\hat{x}$ is uniquely determined by $x$). If $\epsilon$ is sufficiently small, we can then write (recall (\ref{boundarystrip}))
\begin{equation*}
\begin{aligned}
\int_{\Omega}(v\cdot\nabla(w_{\epsilon}-w))\cdot vdx&=\int_{\Gamma_{2\epsilon}}v_{\nu}\partial_{\nu}(w_{\epsilon}-w)_{\nu}v_{\nu}dx+\int_{\Gamma_{2\epsilon}}v_{\nu}\partial_{\nu}(w_{\epsilon}-w)_{\tau}v_{\tau}dx\\
+\int_{\Gamma_{2\epsilon}}v_{\tau}&\partial_{\tau}(w_{\epsilon}-w)_{\nu}v_{\nu}dx+\int_{\Gamma_{2\epsilon}}v_{\tau}\partial_{\tau}(w_{\epsilon}-w)_{\tau}v_{\tau}dx\\
&=:I_1+I_2+I_3+I_4.
\end{aligned}
\end{equation*}
Recalling (\ref{productrule}) as well as $\nabla^{\perp}\psi=w$ and observing $\nabla d=\nu$, we compute
\begin{equation*}
\begin{aligned}
(w_{\epsilon}-w)_{\nu}&=\left(\chi\left(\frac{d}{\epsilon}\right)-1\right)\partial_{\tau}\psi\\
&=\left(\chi\left(\frac{d}{\epsilon}\right)-1\right)w_{\nu},
\end{aligned}
\end{equation*}
\begin{equation*}
\begin{aligned}
(w_{\epsilon}-w)_{\tau}&=-\left(\chi\left(\frac{d}{\epsilon}\right)-1\right)\partial_{\nu}\psi+\frac{1}{\epsilon}\chi'\left(\frac{d}{\epsilon}\right)\psi\\
&=\left(\chi\left(\frac{d}{\epsilon}\right)-1\right)w_{\tau}+\frac{1}{\epsilon}\chi'\left(\frac{d}{\epsilon}\right)\psi,
\end{aligned}
\end{equation*}
\begin{equation}\label{nunu}
\partial_{\nu}(w_{\epsilon}-w)_{\nu}=\frac{1}{\epsilon}\chi'\left(\frac{d}{\epsilon}\right)w_{\nu}+\left(\chi\left(\frac{d}{\epsilon}\right)-1\right)\partial_{\nu}w_{\nu},
\end{equation}
\begin{equation}\label{nutau}
\partial_{\nu}(w_{\epsilon}-w)_{\tau}=\left(\chi\left(\frac{d}{\epsilon}\right)-1\right)\partial_{\nu}w_{\tau}+\frac{1}{\epsilon^2}\chi''\left(\frac{d}{\epsilon}\right)\psi,
\end{equation}
\begin{equation}\label{taunu}
\partial_{\tau}(w_{\epsilon}-w)_{\nu}=\left(\chi\left(\frac{d}{\epsilon}\right)-1\right)\partial_{\tau}w_{\nu},
\end{equation}
\begin{equation}\label{tautau}
\partial_{\tau}(w_{\epsilon}-w)_{\tau}=\left(\chi\left(\frac{d}{\epsilon}\right)-1\right)\partial_{\tau}w_{\tau}+\frac{1}{\epsilon}\chi'\left(\frac{1}{\epsilon}\right)w_{\nu}.
\end{equation}
Before we estimate $I_1$-$I_4$ using (\ref{nunu})-(\ref{tautau}), let us collect some more information: As mentioned above, there is a constant $C$ independent of $t$ such that $|\psi(x)|\leq Cd(x)$. Moreover, since $w\in C^1(\overline{\Omega}\times[0,T])$ and $w_{\nu}=0$ on $\partial\Omega$, we find similarly a constant independent of $t$ such that $|w_{\nu}(x)|\leq Cd(x)$. By assumption, if $\epsilon$ is small enough, then $v$ is H\"{o}lder continuous with exponent $\alpha$ on $\Gamma_{2\epsilon}$, uniformly in $t$, and since $v\in H(\Omega)$ implies $v_{\nu}=0$ on $\partial\Omega$ (cf. \cite{galdibook} Chapter III), we obtain another time-independent constant such that $|v_{\nu}(x)|\leq Cd(x)^{\alpha}$ on $\Gamma_{2\epsilon}$. Finally note that $\psi$, $w$, and $v$ are uniformly bounded on $\Gamma_{2\epsilon}$ provided $\epsilon$ is small, and that there is a constant independent of $\epsilon$ such that $|\Gamma_{2\epsilon}|\leq C\epsilon$.

In the light of these considerations we can use (\ref{nunu})-(\ref{tautau}) to estimate
\begin{equation*}
\begin{aligned}
|I_1|&\leq\frac{1}{\epsilon}\int_{\Gamma_{2\epsilon}}v_{\nu}^2\norm{\chi'}_{\infty}\norm{w_{\tau}}_{\infty}dx+\int_{\Gamma_{2\epsilon}}v_{\nu}^2\norm{\chi-1}_{\infty}\norm{\partial_{\nu}w_{\nu}}_{\infty}dx\\
&\leq C\epsilon^{2\alpha+1}+C\epsilon^{2\alpha+1}\to0,
\end{aligned}
\end{equation*}   
\begin{equation*}
\begin{aligned}
|I_2|&\leq\int_{\Gamma_{2\epsilon}}|v_{\nu}||v_{\tau}|\norm{\chi-1}_{\infty}\norm{\partial_{\nu}w_{\tau}}_{\infty}dx+\frac{1}{\epsilon^2}\int_{\Gamma_{2\epsilon}}|v_{\nu}||v_{\tau}|\norm{\chi''}_{\infty}|\psi|dx\\
&\leq C\epsilon^{\alpha+1}+C\epsilon^{\alpha}\to0,
\end{aligned}
\end{equation*}
\begin{equation*}
|I_3|\leq\int_{\Gamma_{2\epsilon}}|v_{\nu}||v_{\tau}|\norm{\chi-1}_{\infty}\norm{\partial_{\tau}w_{\nu}}_{\infty}dx\leq C\epsilon^{\alpha+1}\to0,
\end{equation*}
\begin{equation*}
\begin{aligned}
|I_4|&\leq\int_{\Gamma_{2\epsilon}}v_{\tau}^2\norm{\chi-1}_{\infty}\norm{\partial_{\tau}w_{\tau}}_{\infty}dx+\frac{1}{\epsilon}\int_{\Gamma_{2\epsilon}}v_{\tau}^2\norm{\chi'}_{\infty}|w_{\nu}|\\
&\leq C\epsilon+C\epsilon\to0,
\end{aligned}
\end{equation*}
all estimates being uniform in time. This proves Theorem \ref{criterion} if $\Omega$ is simply connected.

As a final step, we convince ourselves that the proof can easily be modified to the general case when $\partial\Omega$ has $N$ connected components $\Gamma^1,\ldots,\Gamma^N$ in the spirit of Section 1.4 of \cite{kato2}. There still exists $\psi\in C([0,T];C^2(\overline{\Omega}))\cap C^1(\overline{\Omega}\times[0,T])$ with $\nabla^{\perp}\psi=w$, but we can no longer require $\psi\restriction_{\partial\Omega}=0$. Instead, $\psi$ will take the constant value $\psi^i$ on $\Gamma^i$, but the numbers $\psi^i$ may be different. Now, if $\epsilon>0$ is small enough, then the sets
\begin{equation*}
\Gamma^i_{2\epsilon}=\{x\in\overline\Omega:\operatorname{dist}(x,\Gamma^i)<2\epsilon\},\hspace{0.3cm}i=1,\ldots,N,
\end{equation*}
will be mutually disjoint, so that $w_{\epsilon}$ is well-defined by setting
\begin{equation*}
w_{\epsilon}(x)=\begin{cases}\nabla^{\perp}\left(\chi\left(\frac{\operatorname{dist}(x,\partial\Omega)}{\epsilon}\right)(\psi(x)-\psi^i)\right) & \text{if $x\in\Gamma^i_{2\epsilon}$}\\
w(x) & \text{if $x\in\overline\Omega\setminus\bigcup_i\Gamma^i_{2\epsilon}$}
\end{cases}
\end{equation*}
with $\chi$ as in the simply connected case. With this choice of $w_{\epsilon}$ we can then employ the very same arguments as above.\qed

\begin{rem}
Theorem \ref{criterion} implies that there can not be wild solutions on an annulus with smooth rotational initial data that are H\"{o}lder continuous. Indeed, any admissible H\"{o}lder continuous solution must be dissipative by our theorem, and the weak-strong uniqueness then yields that this solution must coincide with the stationary one. This observation is particularly interesting in the light of recent results (e.g. \cite{isett, bdlsz, daneri}) where examples of H\"{o}lder continuous wild solutions are constructed.  
\end{rem}
\vskip0.1cm
{\it One of the first papers of Professor Mark Vishik ``On general boundary problems for elliptic differential equations''  \cite{Visik} was essential,  in particular in France, for the training of mathematicians in the generation of the first author of this contribution. Then when he turned to Navier-Stokes and turbulence he took an important role in progress over the last 60 years toward the mathematical understanding of turbulence in fluid  mechanics. Hence we hope that this essay will contribute to his memory and to the recognition of his influence on our community.}
\vskip0.1cm
\textbf{Acknowledgements.} The authors would like to thank Professor Edriss Titi for interesting and valuable discussions. 

The research of L. Sz. is supported by ERC Grant Agreement No. 277993. 
Part of this work was done while E. W. was a visitor to the project ``Instabilities in Hydrodynamics'' of the Fondation Sciences Math\'{e}matiques de Paris. He gratefully acknowledges the Fondation's support. 

\bibliography{Rotation}
\end{document}